\newcommand{\R}{{\mathbf{R}}}
\newcommand{\T}{{\mathbf{T}}}
\newcommand{\pr}{\mathop{\! \, \rm pr }\nolimits}
\newcommand{\proj}{\mathrm{pr}}
\DeclarePairedDelimiterX\inner[2]{\langle}{\rangle}{#1 , #2}
\theoremstyle{plain}
\newtheorem{claim}{\sc Claim}[section]
\newtheorem{corollary}[claim]{\sc Corollary}
\newtheorem{theorem}[claim]{\sc Theorem}
\theoremstyle{definition}
\theoremstyle{remark}
\begin{document}

\title[]{An exotic symplectic $\R^6$}
\author[L. Bates and M. Melko]{Larry M. Bates and O. Michael Melko}


\begin{abstract}
An explicit example of an exotic symplectic $\R^6$ is given.  Together with 
an earlier known example on $\R^4$, this yields an explicit exotic 
symplectic form on $\R^{2n}$ for all $n\geq2$.
\end{abstract}

\maketitle

\section{Introduction}
Darboux' theorem states that there is a symplectic chart in the neighbourhood 
of any point in a symplectic manifold.  This means that the symplectic form 
$\omega$  of any symplectic manifold  locally looks like the canonical 
symplectic form $\omega_0$ on $\R^{2n}$, that is, there are local coordinates 
$(x_1,\dots,x_n,y_1,\dots,y_n)$ in which $\omega$ has the form
\[
  \omega = \sum_{a=1}^n dx_a\wedge dy_a.
  \]
A symplectic structure on $\R^{2n}$ is \emph{exotic} if such a Darboux chart cannot be extended to a global symplectic embedding.
It was of considerable interest when Gromov proved the existence of such a 
structure in \cite{gromov85} by first proving a nonvanishing 
theorem. More precisely, he proved the following. 
\begin{theorem}
  For a compact embedded Lagrangian submanifold $L$ in the standard symplectic 
structure $(\R^{2n},\omega_0)$, the relative class $[\omega_0]\in 
H^2(\R^{2n},L,\R)$ never vanishes.
\end{theorem}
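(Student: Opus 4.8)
The plan is to reduce the nonvanishing of the relative class to a statement about holomorphic discs and then to invoke Gromov's pseudoholomorphic curve machinery. First I would reinterpret the hypothesis cohomologically. Write $\lambda_0=\sum_{a=1}^n x_a\,dy_a$, so that $d\lambda_0=\omega_0$, and let $i\colon L\hookrightarrow\R^{2n}$ denote the inclusion. Computing $H^2(\R^{2n},L;\R)$ with the relative de Rham complex $\Omega^k(\R^{2n})\oplus\Omega^{k-1}(L)$ and differential $(\alpha,\beta)\mapsto(d\alpha,\,i^*\alpha-d\beta)$, the form $\omega_0$ is represented by the cocycle $(\omega_0,0)$; this is closed precisely because $d\omega_0=0$ and $i^*\omega_0=0$, the second equation being the Lagrangian condition. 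The class $[\omega_0]$ vanishes if and only if $(\omega_0,0)=(d\gamma,\,i^*\gamma-d\delta)$ for some $\gamma\in\Omega^1(\R^{2n})$ and $\delta\in\Omega^0(L)$, i.e.\ if and only if $\omega_0$ admits a primitive $\gamma$ whose restriction $i^*\gamma$ is exact. Since $\R^{2n}$ is contractible, every primitive of $\omega_0$ has the form $\gamma=\lambda_0+dh$, so $[\omega_0]=0$ exactly when the Liouville class $[\,i^*\lambda_0\,]\in H^1(L;\R)$ vanishes. Thus it suffices to prove that $\R^{2n}$ contains no compact \emph{exact} Lagrangian, one for which $i^*\lambda_0=df$.

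Next I would argue by contradiction. Assume $i^*\lambda_0=df$ for some $f\in C^\infty(L)$, and fix an $\omega_0$-compatible almost complex structure $J$. The geometric heart of the argument, and the substance of Gromov's theorem, is the \emph{unconditional} existence result: every compact Lagrangian $L\subset(\R^{2n},\omega_0)$ bounds a nonconstant $J$-holomorphic disc, that is, a nonconstant map $u\colon(D^2,\partial D^2)\to(\R^{2n},L)$ with $\bar\partial_{J}u=0$. Granting such a disc, the contradiction is immediate. Because $J$ is $\omega_0$-compatible, the integrand $u^*\omega_0$ is the pointwise energy density of $u$, so the symplectic area $\int_{D^2}u^*\omega_0$ is strictly positive for nonconstant $u$. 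On the other hand, Stokes' theorem together with exactness gives
\[
\int_{D^2}u^*\omega_0=\int_{D^2}d(u^*\lambda_0)=\int_{\partial D^2}(u|_{\partial D^2})^*(i^*\lambda_0)=\int_{\partial D^2}d\bigl(f\circ u|_{\partial D^2}\bigr)=0,
\]
a contradiction. Hence no compact exact Lagrangian exists and $[\omega_0]$ never vanishes.

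The one genuinely hard step is the existence of the disc, and this is where all of the analysis lives; it must be established \emph{without} using exactness, since it is exactly what exactness contradicts. I would set it up as a count of $J$-holomorphic discs with boundary on $L$. The ingredients are Fredholm theory, which for generic compatible $J$ cuts out the moduli space of simple discs as a smooth manifold whose dimension is governed by the Maslov index; Gromov compactness, which controls limits of discs of bounded area up to bubbling; and a degree, or mod-$2$ count, argument showing that a suitable such moduli space is nonempty. A crucial simplification available here is that $\pi_2(\R^{2n})=0$, so there is no sphere bubbling, and the only possible degenerations are disc bubbles, whose contribution is controlled by the Maslov index and the a priori area bound. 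Assembling transversality, compactness, and the nonvanishing of the count to produce the disc is the main obstacle, and it is the technical core of \cite{gromov85}.
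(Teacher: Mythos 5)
The paper offers no proof of this statement: it is quoted as Gromov's theorem and cited to \cite{gromov85}, so there is no internal argument to compare against. Judged on its own terms, your reduction is correct and standard: computing $H^2(\R^{2n},L;\R)$ with the relative de Rham complex, using contractibility of $\R^{2n}$ to see that every primitive of $\omega_0$ is $\lambda_0+dh$, and concluding that $[\omega_0]=0$ precisely when $L$ is an exact Lagrangian is right, as is the Stokes computation showing that an exact Lagrangian cannot bound a nonconstant holomorphic disc (which has strictly positive symplectic area for a compatible $J$).

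The genuine gap is the step you yourself flag: the existence of a nonconstant $J$-holomorphic disc with boundary on an arbitrary compact Lagrangian $L\subset\R^{2n}$. That statement is not an ingredient of the theorem --- it \emph{is} the theorem; everything else in your write-up is a few lines of homological algebra and Stokes. Moreover, the mechanism you sketch, a transversality-plus-compactness argument showing that ``a suitable moduli space is nonempty,'' is not quite the right shape. The space of honest $J$-holomorphic discs with boundary on $L$ is always nonempty (it contains the constants), and for a general $L$ one has no control over $\pi_2(\R^{2n},L)$ or the Maslov index, so there is no distinguished nontrivial class in which to set up a count. Gromov's actual argument runs through the inhomogeneous equation $\bar\partial_J u = g$: for $\norm{g}$ sufficiently large there are no solutions for energy reasons, while at $g=0$ the constant solutions contribute nontrivially; the resulting one-parameter family of moduli spaces therefore cannot be compact, and the only possible degeneration --- sphere bubbling being excluded because $\omega_0$ is exact --- is the bubbling off of a nonconstant holomorphic disc with boundary on $L$. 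Without carrying out that continuation argument, together with the a priori energy bound, removal of singularities, and Gromov compactness, the proof is incomplete at its decisive point.
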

He then gave an existence proof for a symplectic form $\omega$ on $\R^4$ with 
an exact Lagrangian torus (\,the class $[\omega]=0\in 
H^2(\R^{2n},L,\R)$\,), and hence an exotic symplectic structure.  
Later, Muller proved the existence of a symplectic structure on $\R^6$ with an 
embedded Lagrangian three-sphere \cite{muller}, which, by Gromov's theorem, must 
be exotic. Similarly, Cho and Yoon have shown the existence of a 
symplectic $\R^4$ with a compact exact Lagrangian submanifold of genus two 
\cite{cho-yoon}. Unfortunately, the proofs of these theorems are 
non-constructive.  It is therefore of interest to be able to write such a 
structure down explicitly.  An example on $\R^4$ was given in 
\cite{bates-peschke}. In this note, we consider the case of $\R^6$.

\section{The construction}

An example of an exotic symplectic $\R^6$ will be constructed by 
giving the symplectic potential. Let 
$(x_1,x_2,x_3,y_1,y_2,y_3)$ be coordinates on $\R^6$. Consider the one forms 
\[
  \psi_a := 
\frac12\left(1-\frac76\left(x_a^2+y_a^2\right)+\frac16\left(x_a^2 
+y_a^2\right) ^ 2 \right) \left(x_a\,dy_a - y_a\,dx_a\right) \quad a=1,2,3
\]
and set
\[ 
\psi :=\psi_1 + \psi_2 +\psi_3.
  \]
  Observe that $\psi\equiv0$ on the three torus $T$ defined by
  \[
    \left\{x_1^2+y_1^2=1\right\} \cap  \left\{x_2^2+y_2^2=1\right\} 
\cap  \left\{x_3^2+y_3^2=1\right\}.
  \]
Define the function $h$ by
\[
  h=\frac12\left(x_1^2 + x_2^2 +x_3^2+y_1^2 + y_2^2 +y_3^2\right),
\]
and let $S=h^{-1}(3/2)$ be the sphere of radius $\sqrt{3}$ centered at the 
origin.
\begin{claim} \label{rank-claim}
  The form $ dh\wedge d\psi \wedge d\psi$ has rank five on the sphere 
$S$.
\end{claim}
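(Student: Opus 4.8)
The plan is to convert the rank assertion into a nonvanishing statement and then into a question about the critical points of a single cubic. First I would pass to the rotation-invariant variables $s_a = x_a^2 + y_a^2$, in which $\psi_a = \tfrac12 s_a P(s_a)\,d\theta_a$ with $P(s) = \tfrac16(s-1)(s-6)$; thus $\psi_a = g(s_a)\,d\theta_a$ for the cubic $g(s) = \tfrac{1}{12}s(s-1)(s-6)$ (this is exactly why $\psi$ vanishes on $T$, where each $s_a = 1$). A direct computation, which is in fact cleanest in the original Cartesian coordinates where $d\psi_a = 2g'(s_a)\,dx_a\wedge dy_a$, then gives
\[
d\psi = \sum_{a=1}^3 2g'(s_a)\,dx_a\wedge dy_a,\qquad g'(s) = \tfrac{1}{12}\bigl(3s^2 - 14s + 6\bigr).
\]
Writing $\beta_a = dx_a\wedge dy_a$ and $G_a = 2g'(s_a)$, the diagonal terms drop out and $d\psi\wedge d\psi = 2\sum_{a<b}G_aG_b\,\beta_a\wedge\beta_b$. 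Wedging with $dh = \sum_c(x_c\,dx_c + y_c\,dy_c)$ kills every contribution except the one carried by the index $e$ complementary to $\{a,b\}$, leaving
\[
dh\wedge d\psi\wedge d\psi = 2\sum_{a<b}G_aG_b\,(x_e\,dx_e + y_e\,dy_e)\wedge\beta_a\wedge\beta_b .
\]

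The key structural point is that a $5$-form on a $6$-dimensional space is automatically decomposable, so its rank equals $5$ precisely when the form is nonzero (its Cauchy characteristic space is then one-dimensional). Hence the Claim is equivalent to the statement that $dh\wedge d\psi\wedge d\psi$ is nowhere zero on $S$. Reading the coefficients above against the coordinate basis of $\Lambda^5$ and dualizing with the standard volume form, the form corresponds to a vector field $Y$ with $|Y|^2$ proportional to $(G_2G_3)^2 s_1 + (G_1G_3)^2 s_2 + (G_1G_2)^2 s_3$. Nonvanishing is therefore equivalent to these three nonnegative terms never vanishing simultaneously on $S$; in particular, at a point with all $s_a>0$ the form vanishes exactly when at least two of the factors $g'(s_a)$ vanish.

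This reduces the entire Claim to a statement about the cubic $g$ on the simplex slice $s_1+s_2+s_3 = 3$, $s_a\ge 0$: no two of the coordinates $s_a$ may sit simultaneously at a zero of $g'$. Since the only zeros of $g'$ are $s_\pm = \tfrac{7\pm\sqrt{31}}{3}$, I would run the case analysis over which two coordinates are critical. The mixed assignment $\{s_-,s_+\}$ is immediately excluded because $s_-+s_+ = \tfrac{14}{3} > 3$ forces the third coordinate negative, and the double-large assignment is excluded because $2s_+ > 3$. The one configuration that the crude count does not settle is the double-small case $s_1 = s_2 = s_-$ with $s_3 = 3 - 2s_-$, which lands on $S$ precisely when $s_- \le \tfrac32$; there both $G_1$ and $G_2$ vanish and all three blocks of $|Y|^2$ die at once.

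I expect this borderline double-small configuration to be the main obstacle, and the decisive inequality to be whether the smaller critical radius satisfies $s_- > \tfrac32$ — equivalently, whether the level $h = 3/2$ is placed strictly outside twice the smaller critical value of $g$. Establishing the correct numerical relationship between the critical points of $g$ and the radius of $S$ is thus the heart of the argument, and it is exactly here that the specific coefficients $\tfrac76$ and $\tfrac16$ must do their work; I would treat this case last and with the most care, since it is the only place where the product structure alone is insufficient and the precise location of $s_-$ is what must be verified.
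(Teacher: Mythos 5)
Your structural reduction is correct, and in fact it is forced: each $\psi_a$ involves only $(x_a,y_a)$ and is rotation invariant, so $d\psi_a = 2g'(s_a)\,dx_a\wedge dy_a$ exactly as you compute, the five-form is $2\sum_{a<b}G_aG_b\,(x_e\,dx_e+y_e\,dy_e)\wedge\beta_a\wedge\beta_b$, rank five is equivalent to nonvanishing, and nonvanishing on $S$ is equivalent to the statement that no two of the $s_a$ sit simultaneously at a zero of $g'$ (your boundary cases $s_a=0$ fold into the same criterion because $g'(0)=\tfrac12\neq0$). The gap is that you never settle the one case your own analysis isolates as decisive, and it is a one-line check: $s_-=\tfrac{7-\sqrt{31}}{3}$, and $s_->\tfrac32$ would require $\sqrt{31}<\tfrac52$, i.e.\ $31<\tfrac{25}{4}$, which is false. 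So $s_-\approx 0.477<\tfrac32$, the double-small configuration $s_1=s_2=s_-$, $s_3=3-2s_-\approx 2.045$ does lie on $S$, both $G_1$ and $G_2$ vanish there, and every term of your expression for $dh\wedge d\psi\wedge d\psi$ dies at once. Carried to completion, your argument therefore refutes the Claim for the form $\psi$ as printed, rather than proving it.

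This puts your proposal in direct conflict with the paper's appendix, and the conflict is instructive. The appendix reduces the Claim to the nonvanishing of $x_3 q(x_1^2,x_2^2)$, $x_1 q(x_2^2,x_3^2)$, $x_2 q(x_3^2,x_1^2)$ with $q(x,y)=2-\tfrac{14}{3}(x+y)+\tfrac{205}{18}xy+x^2+y^2-\tfrac73(x^2y-xy^2)$, and then verifies numerically that $q(x,y)^2+q(y,z)^2+q(z,x)^2$ is bounded below on the simplex. But the coefficient multiplying $x_3$ must be symmetric under interchanging the blocks $(x_1,y_1)\leftrightarrow(x_2,y_2)$, since $dh$, $d\psi\wedge d\psi$, and the basis five-form all are; the correct coefficient is the product $2\bigl(1-\tfrac73 x_1^2+\tfrac12 x_1^4\bigr)\bigl(1-\tfrac73 x_2^2+\tfrac12 x_2^4\bigr)$, exactly your $G_1G_2$. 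The paper's $q$ is not symmetric in its two arguments and does not equal this product (at $(1,1)$ it gives $\tfrac{109}{18}$, whereas the product gives $\tfrac{25}{18}$), so the polynomial fed to \emph{Mathematica} is not the coefficient of the five-form, and its positive lower bound does not certify the Claim; the true coefficient polynomial vanishes at $(s_-,s_-,3-2s_-)$. In short: your approach is the right one, the missing one-line verification goes the opposite way to what you hoped, and what your computation actually uncovers is an error in the paper --- either in the displayed $q$ or in the definition of $\psi$ --- rather than a proof of its Claim.
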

\begin{proof}
  Verification of the claim is summarized in the appendix.
\end{proof}

Note that because the sphere $S$ is a level set of $h$, this 
implies that the pullback of $d\psi\wedge d\psi$ to $S$ has rank 
four everywhere on $S$. Define, in a neighbourhood of $S$, the one form $\chi$ 
by
\[
  \chi := \ast \,(dh\wedge d\psi \wedge d\psi),
\]
where $\ast$ is the Hodge star operator in $\R^6$. Note that 
$\chi\neq0$ in a neighbourhood of $S$.  Set $s=\sqrt{2h}-\sqrt{3}$, so that 
$S=\{s=0\}$.  This allows the identification of the normal bundle 
$\pr:N\rightarrow S$ of the sphere $S$ with a neighbourhood of the sphere.  
Finally, define a two form $\omega$ by
\[
  \omega := d(\proj^*(\psi|_S) + s\chi).
\]

\begin{theorem}
  The two form $\omega$ is a symplectic form in a neighbourhood of $S$.  
Furthermore, the torus $T$ is an exact Lagrangian submanifold. 
\end{theorem}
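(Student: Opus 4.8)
The plan is to verify the two assertions in turn. Since $\omega = d(\pr^*(\psi|_S) + s\chi)$ is manifestly exact it is automatically closed, so the whole content of the first assertion is nondegeneracy, i.e. $\omega^3\neq 0$ near $S$. As nonvanishing of $\omega^3$ is an open condition, it suffices to establish it on $S$ itself and then invoke continuity. I would work in the tubular neighbourhood identification furnished by $s$, under which forms pulled back by $\pr$ are basic (horizontal and $s$-independent) and $d$ commutes with $\pr^*$. Writing
\[
  \omega = \pr^*\!\big(d(\psi|_S)\big) + ds\wedge\chi + s\,d\chi
\]
and setting $s=0$ gives $\omega|_S = \pr^*\!\big(d(\psi|_S)\big) + ds\wedge\chi$, where the first summand is horizontal.

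Expanding the cube on $S$, the terms containing $(ds\wedge\chi)^2$ or $(ds\wedge\chi)^3$ vanish because $ds\wedge ds=0$, while $\big(\pr^*d(\psi|_S)\big)^3 = \pr^*\big((d(\psi|_S))^3\big)=0$ since it is the pullback of a $6$-form on the five-manifold $S$. Hence on $S$
\[
  \omega^3 = 3\,\pr^*\!\big((d(\psi|_S))^2\big)\wedge ds\wedge\chi .
\]
Only the horizontal part of $\chi$ survives the wedge with $ds$, so this is a nonzero top form on the neighbourhood along $S$ precisely when $(d(\psi|_S))^2\wedge(\chi|_S)\neq 0$ as a $5$-form on $S$; equivalently, writing $i\colon S\hookrightarrow\R^6$, when $i^*(\chi\wedge d\psi\wedge d\psi)\neq 0$.

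The heart of the matter — and the step I expect to be the main obstacle — is this last nonvanishing. Because $S=\ker(dh)$ and $\chi\wedge d\psi\wedge d\psi$ is a $5$-form, its restriction to $S$ is nonzero if and only if $dh\wedge\chi\wedge d\psi\wedge d\psi\neq 0$ in $\R^6$. Setting $\Theta := dh\wedge d\psi\wedge d\psi$ and using $\chi=\ast\Theta$, this $6$-form is exactly $\Theta\wedge\ast\Theta = |\Theta|^2\,\mathrm{vol}$, which is nonzero wherever $\Theta\neq 0$. By Claim~\ref{rank-claim}, $\Theta$ has rank five (hence is nonzero) on $S$, so $\chi\neq 0$ there and, by continuity, in a neighbourhood; this closes the nondegeneracy argument. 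I note that $ds = dh/\sqrt{2h}$, so $ds$ and $dh$ are positive multiples of one another on $S$, which is what permits passing between $ds\wedge\chi$ and $dh\wedge\chi$ above.

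For the second assertion, observe first that $T\subset S$: the conditions $x_a^2+y_a^2=1$ for $a=1,2,3$ force $h=3/2$, so $s\equiv 0$ on $T$, and $\dim T = 3 = \tfrac12\dim\R^6$. I would take the natural primitive $\lambda := \pr^*(\psi|_S)+s\chi$ of $\omega$ and restrict it to $T$. Since $\pr$ is the identity on $S\supset T$, the first term pulls back to $\psi|_T$, which vanishes identically because each scalar factor $\tfrac12\big(1-\tfrac76(x_a^2+y_a^2)+\tfrac16(x_a^2+y_a^2)^2\big)$ is zero when $x_a^2+y_a^2=1$; the second term vanishes because $s\equiv 0$ on $T$. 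Thus $\lambda|_T = 0$, whence $\omega|_T = d(\lambda|_T)=0$ and $T$ is Lagrangian, while the (trivial) exactness of $\lambda|_T$ shows the relative class $[\omega]\in H^2(N,T;\R)$ vanishes, so $T$ is an exact Lagrangian submanifold.
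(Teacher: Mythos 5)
Your proof is correct and follows essentially the same route as the paper's (much terser) argument: closedness from exactness, nondegeneracy from the leading term $3\,ds\wedge\chi\wedge d\psi\wedge d\psi$ of $\omega^3$ along $S$ combined with $\Theta\wedge\ast\Theta=|\Theta|^2\,\mathrm{vol}$ and Claim~\ref{rank-claim}, and the exact Lagrangian claim from $T\subset S$, $s|_T=0$, and $\psi|_T\equiv 0$. You have simply made explicit the steps the paper leaves to the reader.
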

\begin{proof}
  The form $\omega$ is closed because it is the exterior derivative of a one 
form, and it is nondegenerate because
\[
  \omega\wedge\omega\wedge\omega = 3 \,ds\wedge \chi\wedge d\psi\wedge d\psi 
+O(s)
\]
in a neighbourhood of $S$.  Furthermore, since $T\subset S$ and 
$\psi|_{T}\equiv0$, $T$ is an exact Lagrangian submanifold.

\end{proof}
\begin{corollary}
  The two form $\omega$ defines an exotic symplectic structure on $\R^6$.
\end{corollary}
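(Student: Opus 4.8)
The plan is to derive the corollary from Gromov's nonvanishing theorem (the first Theorem) together with the preceding theorem, the only real work being to promote $\omega$ from a neighbourhood of $S$ to a genuine symplectic form on all of $\R^6$. By the definition of an exotic structure it suffices to show that the resulting symplectic manifold admits no symplectic embedding into $(\R^6,\omega_0)$ extending a Darboux chart. Since a compact embedded exact Lagrangian is precisely the obstruction that Gromov's theorem detects, and since the preceding theorem already furnishes $T$ as such a Lagrangian, the whole argument reduces to (i) a global extension of $\omega$ and (ii) a short contradiction argument.

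First I would extend $\omega$ to a symplectic form on all of $\R^6$. The complement $\R^6\setminus N$ of the tubular neighbourhood $N$ of $S$ has two components, the ball $B$ bounded by $S$ and the unbounded exterior $E$, and $H^2(\R^6)=0$. The strategy is to glue $\omega$ on $N$ to the standard form $\omega_0$ on $B$ and on $E$. Writing $\beta:=\proj^*(\psi|_S)+s\chi$ so that $d\beta=\omega$ on $N$, and letting $\theta_0$ be the standard Liouville form with $d\theta_0=\omega_0$, one interpolates the primitives across a collar to obtain a closed two form that equals $\omega$ near $S$ and $\omega_0$ on $B$ and $E$. The hard part will be nondegeneracy in the two transition collars: differentiating a cutoff $\rho$ produces a cross term $d\rho\wedge(\beta-\theta_0)$ that can destroy nondegeneracy. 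I would resolve this with a Moser-type argument on each collar $C\cong S^5\times I$, where $\omega$ and $\omega_0$ are automatically cohomologous since $H^2(C)=0$; isotoping $\omega$ to $\omega_0$ relative to the outer boundary of $C$ yields a symplectomorphism that serves as the gluing map. Carrying this out while keeping every form in the interpolating family nondegenerate is the step I expect to require the most care.

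With a global symplectic $\omega$ in hand, the torus $T$ is still a compact embedded Lagrangian, and it remains exact: the extension was not altered near $T$, where $s=0$ and $\proj$ is the identity, so the primitive restricts to $\beta|_T=\psi|_T\equiv0$, giving $[\omega]=0$ in $H^2(\R^6,T,\R)$.

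Finally I would argue by contradiction. If the structure were not exotic, a Darboux chart would extend to a symplectic embedding $\iota\colon(\R^6,\omega)\hookrightarrow(\R^6,\omega_0)$, so that $\iota(T)$ is a compact embedded Lagrangian in the standard structure. Since $\iota^*\omega_0=\omega$, the one form $\iota^*\theta_0$ is a primitive of $\omega$; as $H^1(\R^6)=0$ it differs from the global primitive above by an exact form, so its restriction to $T$ is exact. Transporting by $\iota$, the standard Liouville form restricts to an exact form on $\iota(T)$, that is $[\omega_0]=0$ in $H^2(\R^6,\iota(T),\R)$. This contradicts Gromov's theorem, so no such embedding exists and $\omega$ is exotic.
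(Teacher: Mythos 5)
Your overall logical frame (exhibit a symplectic $\R^6$ containing $T$ as a compact exact Lagrangian, then invoke Gromov's nonvanishing theorem) is the right one, but the route you choose to get a symplectic $\R^6$ --- extending $\omega$ from the neighbourhood $N$ of $S$ to all of $\R^6$ by gluing it to $\omega_0$ on the inner ball and the outer region --- contains a genuine gap, and it is precisely the step you yourself flag as delicate. Interpolating the primitives $\beta$ and $\theta_0$ with a cutoff $\rho$ produces the term $d\rho\wedge(\beta-\theta_0)$, which in general destroys nondegeneracy, and the Moser argument you propose to repair this on a collar $S^5\times I$ requires the interpolating family (e.g.\ $t\omega+(1-t)\omega_0$) to be nondegenerate for all $t$; vanishing of $H^2$ of the collar gives you cohomologous forms but says nothing about nondegeneracy along the path, and there is no reason to expect it here --- indeed the whole point of the construction is that $\omega$ is very far from $\omega_0$. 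Gluing symplectic structures along a separating hypersurface is not a soft operation (one normally needs matching contact-type or Liouville data on the hypersurface), so as written the extension step is not merely unfinished but of doubtful feasibility.

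The paper avoids the extension problem entirely, and this is the one idea missing from your proposal: there is no need for $\omega$ to be defined on the standard copy of $\R^6$; one only needs \emph{some} symplectic manifold diffeomorphic to $\R^6$ containing the exact Lagrangian $T$. Since the neighbourhood of $S$ is identified with the (trivial) normal bundle $S^5\times\R$, removing a single point $p\in S\setminus T$ together with its normal fibre leaves the open set $(S^5\setminus\{p\})\times\R\cong\R^5\times\R=\R^6$, on which $\omega$ is already defined and symplectic, and which still contains $T$ as an exact Lagrangian. Gromov's theorem then applies exactly as in your final paragraph (which is correct as stated). So you should replace your gluing step with this restriction-to-an-open-subset argument; the rest of your reasoning then goes through.
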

\begin{proof}
  A neighbourhood of $S$ is identified with the normal bundle of $S$, and by 
removing a point of $S$ not in the torus $T$, and its normal fibre, we have a 
manifold diffeomorphic to $\R^6$ with an exact Lagrangian submanifold.
\end{proof}

As a final note, we point out that any direct product of the exotic structures on $\R^4$ and $\R^6$ described in \cite{bates-peschke} and here is exotic. Since any integer $n \geq 2$ can be written as $n = 2m + k$ with $m \geq 0$ and $k = 0\ {\rm or}\ 3$, we can bootstrap these exotic structures to an exotic structure on $\R^{2n}$ for any $n\geq 2$.

\bibliographystyle{plain}

\section{appendix}

The object of this appendix is to verify Claim \ref{rank-claim}. First of all, the formula for the one forms $\psi_a$ was inspired by the overtwisted contact structures of Eliashberg \cite{eliashberg}. The coefficient is based on the polynomial $p$, which satisfies
\[
  p(x):=1-7x/6 +x^2/6,\quad p(0)=1, \quad p(1)=0, \quad p(3)=-1.
\]

The actual expression for the five form $dh\wedge d\psi \wedge d\psi$ is far 
too complicated to reproduce here, but the verification of the rank condition 
in Claim \ref{rank-claim} can be simplified by observing that the entire 
construction is invariant under the action of the torus group $\T^3$, which acts 
by rotation in each $(x_a, y_a)$ plane.  This implies that we may set all 
$y_a=0$ in the formulae, reducing the problem to verifying that there is no 
simultaneous zero of the three polynomials
\[
	x_3 q\left(x_1^2, x_2^2\right), \quad
	x_1 q\left(x_2^2, x_3^2\right), \quad
	x_2 q\left(x_3^2, x_1^2\right), 
\]
when restricted to the sphere $x_1^2 + x_2^2 +x_3^2 =3$, where $q$ is 
defined by the formula
\[
  q(x,y) := 2-\frac{14}{3}(x+y)+ \frac{205}{18}xy +x^2+y^2 
-\frac{7}{3}(x^2y-x\,y^2).
\]
It is easily seen that there are no simultaneous zeros of the three polynomials 
above with $x_1, x_2,\ {\rm or}\ x_3 = 0$. The nonexistence of a simultaneous 
zero is therefore equivalent to showing that
\[
Q(x,y,z) := q(x,y)^2 + q(y,z)^2 + q(z,x)^2
\]
has a positive minimum on the simplex $\sigma = \{(x,y,z) : x+y+z =3;\ x,y,z \geq 0 \}$.
It was found by means of \emph{Mathematica} that $Q$ is bounded below by $7\times 10^{-3}$ in the ball of radius $3$ centered at the origin. Since $\sigma$ is contained in this ball, this verifies the claim.

\vspace{20pt}

\begin{minipage}[t]{0.5\textwidth}
\noindent Larry M. Bates \newline
Department of Mathematics \newline
University of Calgary \newline
Calgary, Alberta \newline
Canada T2N 1N4 \newline
bates@ucalgary.ca
\end{minipage}
\begin{minipage}[t]{0.5\textwidth}
\noindent Mike Melko \newline
Mathematical Synergistics LLC \newline
19 7th Ave. S.E., Suite 208\newline
Aberdeen, SD 57401 \newline
USA\newline
mike.melko@gmail.com
\end{minipage}

\end{document}